\renewcommand*{\backref}[1]{}
\renewcommand*{\backrefalt}[4]{%
    \ifcase #1 (Not cited.)%
    \or        (p.\,#2)%
    \else      (pp.\,#2)%
    \fi}
\begin{document}




\newfont{\teneufm}{eufm10}
\newfont{\seveneufm}{eufm7}
\newfont{\fiveeufm}{eufm5}
%
%
\newfam\eufmfam
 \textfont\eufmfam=\teneufm \scriptfont\eufmfam=\seveneufm
 \scriptscriptfont\eufmfam=\fiveeufm
%
%
\def\frak#1{{\fam\eufmfam\relax#1}}
%


\def\bbbr{{\rm I\!R}} 
\def\bbbm{{\rm I\!M}}
\def\bbbn{{\rm I\!N}} 
\def\bbbf{{\rm I\!F}}
\def\bbbh{{\rm I\!H}}
\def\bbbk{{\rm I\!K}}
\def\bbbp{{\rm I\!P}}
\def\bbbone{{\mathchoice {\rm 1\mskip-4mu l} {\rm 1\mskip-4mu l}
{\rm 1\mskip-4.5mu l} {\rm 1\mskip-5mu l}}}
\def\bbbc{{\mathchoice {\setbox0=\hbox{$\displaystyle\rm C$}\hbox{\hbox
to0pt{\kern0.4\wd0\vrule height0.9\ht0\hss}\box0}}
{\setbox0=\hbox{$\textstyle\rm C$}\hbox{\hbox
to0pt{\kern0.4\wd0\vrule height0.9\ht0\hss}\box0}}
{\setbox0=\hbox{$\scriptstyle\rm C$}\hbox{\hbox
to0pt{\kern0.4\wd0\vrule height0.9\ht0\hss}\box0}}
{\setbox0=\hbox{$\scriptscriptstyle\rm C$}\hbox{\hbox
to0pt{\kern0.4\wd0\vrule height0.9\ht0\hss}\box0}}}}
\def\bbbq{{\mathchoice {\setbox0=\hbox{$\displaystyle\rm
Q$}\hbox{\raise
0.15\ht0\hbox to0pt{\kern0.4\wd0\vrule height0.8\ht0\hss}\box0}}
{\setbox0=\hbox{$\textstyle\rm Q$}\hbox{\raise
0.15\ht0\hbox to0pt{\kern0.4\wd0\vrule height0.8\ht0\hss}\box0}}
{\setbox0=\hbox{$\scriptstyle\rm Q$}\hbox{\raise
0.15\ht0\hbox to0pt{\kern0.4\wd0\vrule height0.7\ht0\hss}\box0}}
{\setbox0=\hbox{$\scriptscriptstyle\rm Q$}\hbox{\raise
0.15\ht0\hbox to0pt{\kern0.4\wd0\vrule height0.7\ht0\hss}\box0}}}}
\def\bbbt{{\mathchoice {\setbox0=\hbox{$\displaystyle\rm
T$}\hbox{\hbox to0pt{\kern0.3\wd0\vrule height0.9\ht0\hss}\box0}}
{\setbox0=\hbox{$\textstyle\rm T$}\hbox{\hbox
to0pt{\kern0.3\wd0\vrule height0.9\ht0\hss}\box0}}
{\setbox0=\hbox{$\scriptstyle\rm T$}\hbox{\hbox
to0pt{\kern0.3\wd0\vrule height0.9\ht0\hss}\box0}}
{\setbox0=\hbox{$\scriptscriptstyle\rm T$}\hbox{\hbox
to0pt{\kern0.3\wd0\vrule height0.9\ht0\hss}\box0}}}}
\def\bbbs{{\mathchoice
{\setbox0=\hbox{$\displaystyle     \rm S$}\hbox{\raise0.5\ht0\hbox
to0pt{\kern0.35\wd0\vrule height0.45\ht0\hss}\hbox
to0pt{\kern0.55\wd0\vrule height0.5\ht0\hss}\box0}}
{\setbox0=\hbox{$\textstyle        \rm S$}\hbox{\raise0.5\ht0\hbox
to0pt{\kern0.35\wd0\vrule height0.45\ht0\hss}\hbox
to0pt{\kern0.55\wd0\vrule height0.5\ht0\hss}\box0}}
{\setbox0=\hbox{$\scriptstyle      \rm S$}\hbox{\raise0.5\ht0\hbox
to0pt{\kern0.35\wd0\vrule height0.45\ht0\hss}\raise0.05\ht0\hbox
to0pt{\kern0.5\wd0\vrule height0.45\ht0\hss}\box0}}
{\setbox0=\hbox{$\scriptscriptstyle\rm S$}\hbox{\raise0.5\ht0\hbox
to0pt{\kern0.4\wd0\vrule height0.45\ht0\hss}\raise0.05\ht0\hbox
to0pt{\kern0.55\wd0\vrule height0.45\ht0\hss}\box0}}}}
\def\bbbz{{\mathchoice {\hbox{$\sf\textstyle Z\kern-0.4em Z$}}
{\hbox{$\sf\textstyle Z\kern-0.4em Z$}}
{\hbox{$\sf\scriptstyle Z\kern-0.3em Z$}}
{\hbox{$\sf\scriptscriptstyle Z\kern-0.2em Z$}}}}
\def\ts{\thinspace}

\newtheorem{thm}{Theorem}
\newtheorem{lem}{Lemma}
\newtheorem{lemma}[thm]{Lemma}
\newtheorem{prop}{Proposition}
\newtheorem{proposition}[thm]{Proposition}
\newtheorem{theorem}[thm]{Theorem}
\newtheorem{cor}[thm]{Corollary}
\newtheorem{corollary}[thm]{Corollary}

\newtheorem{prob}{Problem}
\newtheorem{problem}[prob]{Problem}
\newtheorem{ques}{Question}
\newtheorem{question}[ques]{Question}


\numberwithin{equation}{section}
\numberwithin{thm}{section}

\def\squareforqed{\hbox{\rlap{$\sqcap$}$\sqcup$}}
\def\qed{\ifmmode\squareforqed\else{\unskip\nobreak\hfil
\penalty50\hskip1em\null\nobreak\hfil\squareforqed
\parfillskip=0pt\finalhyphendemerits=0\endgraf}\fi}

\def\cA{{\mathcal A}}
\def\cB{{\mathcal B}}
\def\cC{{\mathcal C}}
\def\cD{{\mathcal D}}
\def\cE{{\mathcal E}}
\def\cF{{\mathcal F}}
\def\cG{{\mathcal G}}
\def\cH{{\mathcal H}}
\def\cI{{\mathcal I}}
\def\cJ{{\mathcal J}}
\def\cK{{\mathcal K}}
\def\cL{{\mathcal L}}
\def\cM{{\mathcal M}}
\def\cN{{\mathcal N}}
\def\cO{{\mathcal O}}
\def\cP{{\mathcal P}}
\def\cQ{{\mathcal Q}}
\def\cR{{\mathcal R}}
\def\cS{{\mathcal S}}
\def\cT{{\mathcal T}}
\def\cU{{\mathcal U}}
\def\cV{{\mathcal V}}
\def\cW{{\mathcal W}}
\def\cX{{\mathcal X}}
\def\cY{{\mathcal Y}}
\def\cZ{{\mathcal Z}}
\def\pgdc{\textrm{gcd}}
\newcommand{\rmod}[1]{\: \mbox{mod} \: #1}

\def\Nm{{\mathrm{Nm}}}

\def\Tr{{\mathrm{Tr}}}

\def\epp{\mathbf{e}_{p-1}}

\def\ind{\mathop{\mathrm{ind}}}

\def\mand{\qquad \mbox{and} \qquad}

\newcommand{\commH}[1]{\marginpar{%
\begin{color}{magenta}
\vskip-\baselineskip 
\raggedright\footnotesize
\itshape\hrule \smallskip H: #1\par\smallskip\hrule\end{color}}}

\newcommand{\commI}[1]{\marginpar{%
\begin{color}{blue}
\vskip-\baselineskip 
\raggedright\footnotesize
\itshape\hrule \smallskip I: #1\par\smallskip\hrule\end{color}}}



\newcommand{\ignore}[1]{}

\hyphenation{re-pub-lished}

\parskip 1.5 mm
\parindent 8 pt

\def\GL{\operatorname{GL}}
\def\SL{\operatorname{SL}}
\def\PGL{\operatorname{PGL}}
\def\PSL{\operatorname{PSL}}
\def\li{\operatorname{li}}

\def\vec#1{\mathbf{#1}}

\def \F{{\mathbb F}}
\def \K{{\mathbb K}}
\def \Z{{\mathbb Z}}
\def \N{{\mathbb N}}
\def \Q{{\mathbb Q}}
\def \T{{\mathbb T}}
\def \C {{\mathbb C}}
\def \R{{\mathbb R}}
\def\Fp{\F_p}
\def \fp{\Fp^*}

\def \Rc{{\mathcal R}}
\def \Qc{{\mathcal Q}}
\def \Ec{{\mathcal E}}

\def \DN{D_N}
\def\va{\mbox{\bf{a}}}

\def\Kc{\,{\mathcal K}}
\def\Ic{\,{\mathcal I}}

\def\\{\cr}
\def\({\left(}
\def\){\right)}
\def\fl#1{\left\lfloor#1\right\rfloor}
\def\rf#1{\left\lceil#1\right\rceil}

\def\Ln#1{\mbox{\rm {Ln}}\,#1}

\def \nd {\, | \hspace{-1.2mm}/\,}

 \def\e{\mathbf{e}}

\def\ep{\mathbf{e}_p}
\def\eq{\mathbf{e}_q}

\def\wt#1{\mbox{\rm {wt}}\,#1}

\def\Mob{M{\"o}bius }


\title[M{\"o}bius Transformation 
and M{\"o}bius function]{Disjointness of the M{\"o}bius Transformation 
and M{\"o}bius Function}

\author[E. H. El Abdalaoui]{El Houcein El Abdalaoui}
\address{
Laboratoire de Math{\'e}matiques Rapha{\"e}l Salem, 
Universit{\'e} de Rouen Normandie, 
F76801 Saint-{\'E}tienne-du-Rouvray, France}
\email{elhoucein.elabdalaoui@univ-rouen.fr}

\author{Igor E. Shparlinski}
\address{School of Mathematics and Statistics, University of New South Wales,
 Sydney NSW 2052, Australia}
\email{igor.shparlinski@unsw.edu.au}


\date{}

\pagenumbering{arabic}

\begin{abstract}
We study the distribution of the sequence of elements of the discrete 
dynamical system 
generated by the  \Mob transformation  $x \mapsto (ax + b)/(cx + d)$ over a finite 
field of $p$ elements.. 
Motivated by a recent conjecture of P.~Sarnak, we obtain nontrivial estimates of exponential sums with 
such sequences that imply that trajectories of this dynamical system 
are disjoined with the \Mob function. 
\end{abstract}
\subjclass[2010]{11L07, 11N60, 11T23, 37P05}
\keywords{\Mob function, \Mob transformation, \Mob  disjointness, exponential sums over primes} 

\maketitle

\section{Introduction}


Let, as usual $\mu(n)$ denote the  {\it \Mob function\/}, that is,
$\mu(n) = 0$ if $n$ is not squarefree and $\mu(n) = (-1)^s$ 
if $n$ is a product of $s$ distinct primes. 
Furthermore, given a 
  compact topological space $X$ and a homeomorphism  $T: X\to X$, we
consider the {\it flow} $\cX = (T,X)$.
The {\it \Mob disjointness conjecture\/}
of Sarnak~\cite{Sarn} asserts that for any flow $\cX = (T,X)$ 
of topological  entropy zero, 
we have 
\begin{equation}
\label{eq:DisjConj}
\sum_{n \le N} \mu(n) f\(T^n x\) = o(N), \qquad N \to \infty,
\end{equation}
for any $x \in X$ and 
a continuous complex-valued function $f$ on $X$. 
This conjecture has recently attracted very  active interest 
and has actually been established for several 
classes of flows, see~\cite{eALdlR,Bour3,BSZ,CarRud,Eis,FoGang,GrTao,K-PLe,LiSa,Ryz, FKL}
and references therein. Moreover, for the connection between the Sarnak and Chowla conjectures, we refer to
very recent works of el Abdalaoui~\cite{elabdal}, Gomilko, Kwietniak and Lema\'{n}czyk~\cite{lem},  Tao~\cite{Tao1} and 
Tao and Ter\"{a}v\"{a}inen~\cite{TaoTer}.

As usual, we use $\F_q$ to denote the finite field of $q$ elements.

Here we consider a discrete analogue of this conjecture for the 
flow $\cM = (A, \F_p)$ formed by the
{\it \Mob map\/}
\begin{equation}
\label{eq:MobMap}
A: \ x \mapsto \frac{ax+b}{cx+d}
\end{equation}
over   $\F_p$, where $p$ is
a sufficiently large prime.  

This transformation, over the complex numbers and also in finite fields and rings,  
has been extensively studied because of their relevance to the ergodic theory, and
dynamical systems, see~\cite{BNR, Bour2.5, Kelm, Kurl, KRR, KuRu1,  KuRu2, Rosen} and 
references therein. We also recall that this transformation on the 2-dimensional torus 
$\T^2 = (\R/\Z)^2$ is also sometimes called the {\it cat map\/}. It also has important links 
with theoretical physics, see, for example,~\cite[Section~4.3]{BluRe} or~\cite[Appendix~A]{MarWre}. 

Here we aim to establish  an appropriate version of the Sarnak conjecture for the
  \Mob map over $\F_p$. This leads us to investigating of exponential sums along the trajectories of~\eqref{eq:MobMap}
twisted with the \Mob function.

In turn, exponential  sums with the \Mob function  are 
closely related to sums over primes, which is associated with 
the behaviour of dynamical systems at  ``prime'' times, see~\cite{SarUb}
for a general point of view and also specific results 
for dynamical systems on $\SL_2(\R)$. 

We note  that  the study of ergodic dynamical system along  sequences of arithmetic interest, initiated by Bourgain~\cite{Bour0,Bour1,Bour1.5,Bour1.7}, see also 
 the surveys by Rosenblatt and Wierdl~\cite{RosenblattW} and  by Thouvenot~\cite{ThouvenotB}. 
In particular, the case of primes takes its origin in the works of  Bourgain~\cite{Bour1}
and Weirdl~\cite{Wierdl}; we refer also to the results of Nair~\cite{Nair1, Nair2}.
For several  more results on the Prime Ergodic Theorem
and Ergodic Theorem with Arithmetical Weights, we refer to~\cite{Buczolich, Cuny-Weber, elabdal-M-R, Eis, EisLin, Nair1, Nair2}, 
see also  the references therein and a very recent survey by   Eisner and  Lin~\cite{EisLin}. 

For the orbits of the one-dimensional dynamical system such as 
 $x \mapsto gx$ over  $\F_p$ such results are given in~\cite{BCFS,BFGS2, BFGS3, Bour2,GarShp,OstShp1}.

\section {Formal set-up}

For $c\ne 0$ also extend the definition~\eqref{eq:MobMap}
by setting
\begin{equation}
\label{eq:MobPole}
A(-d/c) = a/c.
\end{equation}
It is now easy to check that  this  extended map
 $x \mapsto A(x)$ induces a permutation 
of $\F_p$.

In fact, we always identify the map~\eqref{eq:MobMap}
with a nonsingular matrix
$$
A = \begin{pmatrix} a & b\\ c & d \end{pmatrix} \in \GL_2(\F_p), 
$$
and we also always assume that $c\ne 0$ (so $A$ is not a linear map).

Moreover, after an appropriate scaling of the coefficient of $A$ we 
can always assume that 
\begin{equation}
\label{eq:MatrM}
A = \begin{pmatrix} a & b\\ c & d \end{pmatrix} \in \SL_2(\F_p). 
\end{equation}

Furthermore, for $\xi_0\in \F_p$ we consider the trajectory 
\begin{equation}
\label{eq:Traj}
\xi_{n} =A\(\xi_{n-1}\) = A^n\(\xi_0\),
 \qquad n =  1,2,
\ldots\,,
\end{equation}
generated  by iterations of $A$. 

It is easy to see that each sequence of the form~\eqref{eq:Traj}
either terminates after finitely many steps (if  $c \xi_{n-1}+d = 0$)
of is eventually periodic, and then, as $A$ is a permutation it is 
purely periodic.

It is known that showing~\eqref{eq:DisjConj}
can be reduced to estimating exponentials sums along trajectories of $\cX$
twisted by the \Mob function. In our case, we are interested in the sums
\begin{equation}
\label{eq:Sum S}
S_\psi(N) = \sum_{n \le N} \mu(n) \psi\(\xi_n\)
\end{equation}
twisted by the \Mob function 
along the trajectory~\eqref{eq:Traj} 
with a nontrivial additive character $\psi$ of $\F_p$.

We remark, that similar sums, however associated with 
a linear  map $x \mapsto g x$ over $\F_p$, that is, 
of the sequence $\xi_0g^n$, have been estimated 
in~\cite[Theorem~5.1]{BCFS}. 
In fact, using the ideas of~\cite{BFGS3} it is possible to 
improve~\cite[Theorem~5.1]{BCFS}, see also~\cite{Bour2}. Furthermore, 
exponential sums over primes, associated with similar dynamical systems
on elliptic curves over $\F_p$ have been estimated in~\cite{BFGS2} 
(see also~\cite[Section~4]{OstShp1}), and can easily be extended to
sums with the \Mob function.

\section {Our approach and main result}

One of the ingredients of our approach is an explicit formula for the elements 
of the sequence~\eqref{eq:Traj}, which is essentially based on the spectral decomposition, 
see  Lemma~\ref{lem:ExpFrac}. 

We then combine it with a  new general result which holds for arbitrary multiplicative 
functions and which is based on the ideas of Bourgain, Sarnak
and Ziegler~\cite[Theorem~2]{BSZ} and    K{\'a}tai~\cite{Kat}, see Lemma~\ref{lem:KBSZ}, which we believe is
of independent interest.

This is  combined with some bounds of exponential sums   which in turn are 
 based on the Weil bound, see Lemmas~\ref{lem:Weil} and~\ref{lem:Li} and  allows us
to obtain a non-trivial estimate for the sums~\eqref{eq:Sum S}.

Throughout the paper, the implied constants in 
the symbols `$O$',  `$\ll$' and   `$\gg$' 
may occasionally, where obvious, depend on 
the  real positive parameter   $\varepsilon$, and 
are absolute otherwise
(we recall that $U \ll V$ and  $V \gg U$  are both equivalent
to $U = O(V)$).

In all our bounds we have to assume that 
\begin{equation}
\label{eq: t large}
t \ge p^{1/2+\varepsilon}, 
\end{equation}
which is not a severe restriction as it is satisfied 
by the majority of the sequences, see, for example,~\cite{Chou}. 

Our main result is the following bound:

\begin{theorem}
\label{thm:MobSum}  
Let     $\varepsilon>0$ be  a fixed  sufficiently small real number. 
If  the characteristic polynomial of the matrix $A$  of the form~\eqref{eq:MatrM}
has two distinct roots  in $\F_{p^2}$ and
the period length  $t$ of the sequence~\eqref{eq:Traj}  satisfies~\eqref{eq: t large} 
then,   for any real    $\alpha$ with
\begin{equation}
\label{eq: Cond_T1}
\alpha \ge  \frac{3 ( \log \log p)^6}{\varepsilon  \log p}
\end{equation}
and integer  $N$ with
\begin{equation}
\label{eq: Cond_T2}
N  \ge  p^{1/2}  \exp\(5 \alpha^{-1} \(\log (1/\alpha)\)^6\) \log p.  
\end{equation}
uniformly over all  nontrivial  additive characters $\psi$ of $\F_p$, we have
$$
|S_\psi(N)| \ll   \alpha N.
$$
\end{theorem}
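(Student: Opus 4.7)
The plan is to combine the explicit spectral formula for $\xi_n$ (Lemma~\ref{lem:ExpFrac}) with the multiplicative-function criterion of Bourgain--Sarnak--Ziegler/K{\'a}tai (Lemma~\ref{lem:KBSZ}), reducing the bound on $S_\psi(N)$ to bilinear (type II) correlation sums which are then controlled by Weil-type estimates (Lemmas~\ref{lem:Weil} and~\ref{lem:Li}).

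First I would apply Lemma~\ref{lem:KBSZ} to the multiplicative function $\mu$ and to the bounded sequence $u_n = \psi(\xi_n)$. This reduces the bound on $S_\psi(N)$ to establishing uniform cancellation in the bilinear sums
$$
C(q_1,q_2) = \sum_{n \le N/\max(q_1,q_2)} \psi(\xi_{q_1 n})\, \overline{\psi(\xi_{q_2 n})},
$$
for distinct primes $q_1,q_2$ ranging over a window whose size is dictated by the parameter $\alpha$. The quantitative form of the criterion, together with the conditions~\eqref{eq: Cond_T1} and~\eqref{eq: Cond_T2}, is what ultimately produces the final saving of size $\alpha N$.

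The second step is to unpack $\xi_{q_1 n}$ and $\xi_{q_2 n}$ using the spectral representation of Lemma~\ref{lem:ExpFrac}. Since the characteristic polynomial of $A \in \SL_2(\F_p)$ has two distinct roots $\lambda,\lambda^{-1} \in \F_{p^2}^*$, diagonalising $A$ over $\F_{p^2}$ yields an identity of the shape
$$
\xi_n = \frac{\alpha_1 \lambda^n + \beta_1 \lambda^{-n}}{\gamma_1 \lambda^n + \delta_1 \lambda^{-n}},
$$
for constants $\alpha_1,\beta_1,\gamma_1,\delta_1 \in \F_{p^2}$ depending only on $A$ and $\xi_0$. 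Setting $y = \lambda^{2n}$, so that $y$ ranges over the cyclic subgroup $H \le \F_{p^2}^*$ of order $t$ generated by $\lambda^2$, the inner sum $C(q_1,q_2)$ becomes a character sum of the form $\sum_{y \in H} \Psi\bigl(F(y)\bigr)$, where $\Psi$ is an additive character of $\F_{p^2}$ and $F$ is an explicit rational function of $y^{q_1}$ and $y^{q_2}$.

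The third step is to bound this sum by the Weil estimate over a multiplicative subgroup, using Lemmas~\ref{lem:Weil} and~\ref{lem:Li}. Provided $F$ is non-degenerate (not constant and not of the form $\Psi$ kills), one obtains a saving of order $p^{1/2}\log p$ over the trivial bound $t$, and the hypothesis~\eqref{eq: t large}, namely $t \ge p^{1/2+\varepsilon}$, ensures that this saving is nontrivial. Feeding the resulting bound for $C(q_1,q_2)$ back into Lemma~\ref{lem:KBSZ} and optimising the prime window against $\alpha$ via~\eqref{eq: Cond_T1} and~\eqref{eq: Cond_T2} delivers $|S_\psi(N)| \ll \alpha N$.

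The main obstacle, and the place requiring care, is verifying non-degeneracy of the rational function $F$ for every pair of distinct primes $q_1 \ne q_2$ in the relevant range. Concretely, one must show that the Laurent polynomial obtained by clearing denominators is not annihilated by $\Psi$, which amounts to ruling out an algebraic identity of the form $F(y) \in \F_p$ for all $y \in H$; this is controlled by the assumption that $\lambda,\lambda^{-1}$ are distinct (so the spectral decomposition is genuinely two-dimensional) together with the fact that $q_1 \not\equiv q_2$ modulo the exponents that appear, which holds automatically for primes in a window much smaller than $t$. Once this degeneracy check is secured, the rest of the argument is bookkeeping: summing the individual Weil savings over the $\alpha$-dependent prime window, applying Lemma~\ref{lem:KBSZ} and comparing with the bounds~\eqref{eq: Cond_T1}--\eqref{eq: Cond_T2}.
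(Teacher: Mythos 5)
Your proposal follows essentially the same route as the paper: the bound is obtained by feeding the correlation estimate for $\sum_{n}\psi(\xi_{rn})\overline{\psi(\xi_{sn})}$ (the paper's Lemma~\ref{lem:SinglSum 2term}, proved exactly as you describe, via Lemma~\ref{lem:ExpFrac} and the Weil-type bounds of Lemmas~\ref{lem:Weil} and~\ref{lem:Li} over the subgroup generated by $\vartheta^2$) into the quantitative Bourgain--Sarnak--Ziegler/K{\'a}tai criterion of Lemma~\ref{lem:KBSZ} with $\rho \asymp t^{-1}p^{1/2}\log p$, and then checking that~\eqref{eq: Cond_T1} and~\eqref{eq: Cond_T2} imply the hypotheses~\eqref{eq: Cond1} and~\eqref{eq: Cond2} of that lemma. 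The non-degeneracy issue you flag is handled in the paper simply by the hypothesis $(u,v)\neq(0,0)$ together with $k<m$ in Lemma~\ref{lem:SinglSum 2term} (with degenerate trajectories excluded by the period condition~\eqref{eq: t large}), so your outline matches the paper's argument in all essentials.
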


We  remark that  Theorem~\ref{thm:MobSum}  is nontrivial starting from the values of $N$ slighly 
larger than $p^{1/2}$ which is certainly the best possible range until the  
the condition~\eqref{eq: t large} is relaxed (which is presently necessary for nontrivial estimates
of exponential sums along consecutive integers).  
\section{\Mob transformation 
and binary recurrences} 
\label{lin:LRS}

\begin{lemma}
\label{lem:LRS} 
Let $f(Z)= Z^2 - eZ +1 \in \F_p[Z]$, where $e = a+d$, 
be the characteristic polynomial of the matrix $A$ of the form~\eqref{eq:MatrM}. 
Then there are two binary recurrence sequences $u_n$ and $v_n$ satisfying
$$
u_{n+2} = e u_{n+1} + u_n \mand v_{n+2} = ev_{n+1} + v_n 
$$
with  the initial values 
$$
(u_0, u_1) = (\xi_0, a\xi_0 + b) \mand (v_0, v_1) = (1, c\xi_0 + d) 
$$
such that 
$$
\xi_n = u_n/v_n
$$
for $n =0,1, \ldots$. 
\end{lemma}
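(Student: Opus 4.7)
The strategy is to pass to matrix coordinates and then read the recurrences off from Cayley--Hamilton. Writing
$$
A^n = \begin{pmatrix} a_n & b_n \\ c_n & d_n \end{pmatrix},
$$
the identification of $A$ with a M\"obius transformation gives the action of $A^n$ on $\xi_0$ as
$$
\xi_n = A^n(\xi_0) = \frac{a_n \xi_0 + b_n}{c_n \xi_0 + d_n}.
$$
This immediately suggests the definitions $u_n = a_n \xi_0 + b_n$ and $v_n = c_n \xi_0 + d_n$, which give $\xi_n = u_n/v_n$ tautologically, with the convention~\eqref{eq:MobPole} absorbing the exceptional case where the denominator vanishes. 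The asserted initial values $(u_0,u_1) = (\xi_0, a\xi_0 + b)$ and $(v_0,v_1) = (1, c\xi_0 + d)$ are then read off directly from $A^0 = I$ and $A^1 = A$.

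To produce the recurrence, I would invoke the Cayley--Hamilton theorem: since $A \in \SL_2(\F_p)$ has characteristic polynomial $f(Z) = Z^2 - eZ + 1$, one has $f(A) = 0$, and multiplication by $A^n$ yields a matrix identity of the form $A^{n+2} = eA^{n+1} - A^n$. Reading off any single entry shows that each of the four scalar sequences $a_n, b_n, c_n, d_n$ obeys a second-order linear recurrence with characteristic polynomial $f(Z)$. Since $u_n$ and $v_n$ are fixed $\F_p$-linear combinations of these entries (with coefficients $\xi_0$ and $1$), they inherit the same recurrence.

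The only place requiring any care is reconciling the sign conventions in the recurrence with the form of $f(Z)$ written in the statement, which is a direct check using $A$ and $A^2$ applied to $\xi_0$. I do not expect a genuine obstacle; the lemma is essentially a reformulation of Cayley--Hamilton once the orbit is written in homogeneous coordinates $(u_n : v_n)$, and the bulk of the argument is the algebraic dictionary between matrix powers and the action by fractional linear transformation.
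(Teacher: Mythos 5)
Your proof is correct and follows essentially the same route as the paper: both pass to homogeneous coordinates $(u_n : v_n) = A^n(\xi_0 : 1)$, read off the initial values from $I$ and $A$, and obtain the scalar recurrence from the Cayley--Hamilton relation $A^{n+2} = eA^{n+1} - A^n$. The sign issue you flag is genuine but harmless: since $\det A = 1$ the recurrence consistent with $f(Z) = Z^2 - eZ + 1$ is $u_{n+2} = e u_{n+1} - u_n$, so the ``$+u_n$'' in the statement is evidently a typo, and your derivation produces the correct form.
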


\begin{proof} It is easy to check that the recursive definition of $u_n$ and $v_n$ can be rewritten as 
$$
 \begin{pmatrix} u_{n+1}\\ v_{n+1} \end{pmatrix}
  = A   \begin{pmatrix} u_n\\ v_n \end{pmatrix} , \qquad n = 0,1, \ldots. 
$$
with  the initial values 
$$
(u_0, u_1) = (\xi_0, a\xi_0 + b) \mand (v_0, v_1) = (1, c\xi_0 + d) 
$$
Then one verifies that the desired statement by induction on $n$. 
\end{proof}

Using the well known expression of linear recurrence sequences via the roots of 
characteristic polynomials, see, for example,~\cite{EvdPSW}, we immediately
derive from Lemma~\ref{lem:LRS}, in a straightforward fashion, the following explicit 
formula:

\begin{lemma}
\label{lem:ExpFrac} Let $f(Z)= Z^2 - eZ +1 \in \F_p[Z]$, where $e = a+d$, 
be the characteristic polynomial of the matrix $A$  of the form~\eqref{eq:MatrM}, 
 which has two distinct roots $\vartheta$ and 
$\vartheta^{-1}$ in $\F_{p^2}$. 
Then there exist elements $\alpha, \beta, \gamma\in \F_{p^2}$
such that 
$$
\xi_n = \alpha + \frac{\beta}{\vartheta^{2n} + \gamma},  \qquad n = 0,1, \ldots. 
$$
\end{lemma}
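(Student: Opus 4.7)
The plan is to combine Lemma~\ref{lem:LRS} with the standard spectral decomposition of binary linear recurrence sequences whose characteristic polynomial has distinct roots. Since $f(Z) = Z^2 - eZ + 1$ has two distinct roots $\vartheta, \vartheta^{-1} \in \F_{p^2}$, the usual Binet-type formula for such sequences (see~\cite{EvdPSW}) yields
$$
u_n = A_1 \vartheta^n + A_2 \vartheta^{-n}, \qquad v_n = B_1 \vartheta^n + B_2 \vartheta^{-n},
$$
where $A_1, A_2, B_1, B_2 \in \F_{p^2}$ are uniquely determined by the initial conditions of Lemma~\ref{lem:LRS} through the nonsingular Vandermonde system with rows $(1,1)$ and $(\vartheta,\vartheta^{-1})$ (nonsingular precisely because $\vartheta \ne \vartheta^{-1}$).

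Next, forming the ratio and multiplying numerator and denominator by $\vartheta^n$, I would obtain
$$
\xi_n \;=\; \frac{u_n}{v_n} \;=\; \frac{A_1 \vartheta^{2n} + A_2}{B_1 \vartheta^{2n} + B_2},
$$
which exhibits $\xi_n$ as a linear-fractional function of $w := \vartheta^{2n}$. A one-step partial fraction decomposition then gives
$$
\frac{A_1 w + A_2}{B_1 w + B_2} \;=\; \frac{A_1}{B_1} \;+\; \frac{(A_2 B_1 - A_1 B_2)/B_1^2}{w + B_2/B_1},
$$
which is exactly the required form with $\alpha = A_1/B_1$, $\beta = (A_2 B_1 - A_1 B_2)/B_1^2$, and $\gamma = B_2/B_1$.

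The only potential obstacle is the edge case in which $B_1 = 0$, since then the partial-fraction step fails. However, from $v_0 = 1$ we have $B_1 + B_2 = 1$, so $B_1$ and $B_2$ cannot both vanish. Hence, if necessary, I would swap the labelling of the two roots (replacing $\vartheta$ by $\vartheta^{-1}$, which interchanges $B_1$ and $B_2$) to ensure $B_1 \neq 0$; this is permissible because the two roots enter the hypothesis of the lemma symmetrically. With this mild normalisation the calculation above is complete and routine, so I expect the writeup to be brief once this point is addressed.
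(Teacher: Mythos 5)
Your proposal is correct and follows exactly the route the paper intends: the paper derives Lemma~\ref{lem:ExpFrac} "immediately" from Lemma~\ref{lem:LRS} via the standard Binet-type formula for binary recurrences with distinct roots, which is precisely your computation. Your treatment of the $B_1=0$ case (relabelling the roots, using $v_0=1$ to rule out $B_1=B_2=0$) is a worthwhile detail that the paper leaves implicit.
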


 \section{Bounds on single character sums} 

Let $p$ be the characteristic of $\F_p$ and let 
$\overline \F_p$ denote the algebraic closure of $\F_p$. 
For an exhasutive account on the character sums over finite fields we refer to~\cite[Chapter~11]{IwKow}. 

One of our main tools is the bound on hybrid sums 
of multiplicative and additive characters, which 
in its classical form is  given by Weil~\cite[Example~12 of Appendix~5]{Weil}; 
see also~\cite[Theorem~3 of Chapter~6]{Li1}.  

\begin{lemma}
\label{lem:Weil}
For any polynomials $g(X), h(X) \in \F_p[X]$
and  any nontrivial  additive character $\psi$  and arbitrary 
multiplicative character $\chi$ of $\F_p$ we have
$$
 \sum_{\substack{x \in \F_p \\ g(x) \ne 0}}
\psi\( h(X)/g(X)\)\chi(x)  \ll  \max\{\deg g\,, \deg h\}  p^{1/2}.  $$
\end{lemma}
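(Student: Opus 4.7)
The plan is to recognise this as the classical Weil bound for hybrid character sums on the affine line, a consequence of the Riemann hypothesis for function fields over $\F_p$. First, I would set $Z$ to be the open subscheme of $\mathbb{A}^1_{\F_p}$ (or of $\mathbb{G}_m$ if $\chi$ is nontrivial) obtained by removing the zeros of $g$, and then construct the rank-one $\ell$-adic local system $\mathcal{L}$ on $Z$ as the tensor product of the Artin--Schreier sheaf $\mathcal{L}_{\psi(h/g)}$ and the Kummer sheaf $\mathcal{L}_\chi$. The Grothendieck--Lefschetz trace formula then expresses our sum as an alternating sum of traces of Frobenius on the compactly supported cohomology $H^*_c(Z_{\overline{\F_p}}, \mathcal{L})$.

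By Deligne's theorem, the Frobenius eigenvalues on $H^1_c$ are algebraic numbers of absolute value $p^{1/2}$, while $H^0_c$ and $H^2_c$ vanish whenever $\mathcal{L}$ is geometrically nontrivial; this yields
\[
\Bigl|\sum_{x \in \F_p : g(x) \ne 0} \psi(h(x)/g(x)) \chi(x)\Bigr| \le \dim H^1_c(Z_{\overline{\F_p}}, \mathcal{L}) \cdot p^{1/2}.
\]
I would then bound $\dim H^1_c$ via the Euler--Poincar\'e formula (equivalently, Grothendieck--Ogg--Shafarevich): it is controlled by the Euler characteristic of $Z$ together with a sum of local Swan conductors at each puncture and at the point at infinity, each of which is at most $\max(\deg h, \deg g)$ by inspection of the ramification of $h/g$ and of $\chi$.

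The main subtlety is verifying geometric nontriviality of $\mathcal{L}$: since $\psi$ is nontrivial, $\mathcal{L}_{\psi(h/g)}$ is geometrically nontrivial whenever $h/g$ is nonconstant, and the nonconstant case is precisely the one where the bound is needed (if $h/g$ is constant the sum reduces either to a Gauss-type sum or an orthogonality relation, handled directly with absolute value at most $\deg g + 1$). A fully elementary alternative is the Stepanov--Bombieri polynomial method, which delivers the same bound without any $\ell$-adic machinery; the precise formulation we need is proved in~\cite[Appendix~5]{Weil} and restated in~\cite[Chapter~6, Theorem~3]{Li1}, so in practice one would invoke these references directly.
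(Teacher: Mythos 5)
The paper offers no proof of this lemma at all: it is stated as a known result, attributed to Weil~\cite[Example~12 of Appendix~5]{Weil} and to~\cite[Theorem~3 of Chapter~6]{Li1}, exactly the two sources you name in your closing sentence. So your fallback of ``invoke these references directly'' coincides with what the authors actually do, and the $\ell$-adic sketch you supply (Artin--Schreier tensor Kummer sheaf, Grothendieck--Lefschetz, Deligne, Grothendieck--Ogg--Shafarevich for the $H^1_c$ dimension) is the standard correct derivation of the bound that those references encapsulate; it is sound and adds content the paper omits. One small caveat worth recording: as literally stated the lemma fails in the doubly degenerate case where $h/g$ is constant \emph{and} $\chi$ is trivial (the sum is then $p-O(\deg g)$), so the nontriviality hypothesis you isolate is genuinely needed; this does not affect the paper, since in its only application (Lemma~\ref{lem:SinglSum 2term}) the rational function is nonconstant because $(u,v)\ne(0,0)$.
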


We also need a modification of a bound of Li~\cite[Theorem~2]{Li2}
which also applies to rational functions rather than to polynomials.
We recall that the trace map and norm map from $\F_{p^n}$ to $\F_p$ are given by 
$$\Tr_{\F_{p^n}/\F_p(z)}=\sum_{i=0}^{n-1}z^i  \mand
\Nm_{\F_{p^n}/\F_p(z)}=\prod_{i=0}^{n-1}z^{p^i}.$$

To simplify the notation, we use $\Tr(z)$ and $\Nm(z)$ to denote the trace and the norm, respectively,  
of an element  $z$ of the quadratic extension  $\F_{p^2}$ in $\F_p$. 
Combining the arguments of the proof of~\cite[Theorem~2]{Li2} with those
in~\cite[Section~4]{Li15}, one easily derives:

\begin{lemma}
\label{lem:Li}
For any polynomials $g(X), h(X) \in \F_{p^2}[X]$
and  any nontrivial  additive character $\psi$  and arbitrary 
multiplicative character $\chi$ of $\F_p$ we have
$$
 \sum_{\substack{x \in \F_{p^2} \\ g(x) \ne 0\\\Nm(x) =1}}
\psi\( \Tr\(h(X)/g(X)\)\)\chi(x)  \ll  \max\{\deg g\,, \deg h\}  p^{1/2}.  
$$
\end{lemma}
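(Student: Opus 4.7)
The plan is to reduce the sum over the norm-one torus $G = \{x \in \F_{p^2}^* : \Nm(x) = 1\}$ to a one-variable hybrid character sum over $\F_p$ and then invoke Lemma~\ref{lem:Weil} in its rational-function form (as explained in~\cite[Section~4]{Li15}). The combinatorial input is that $G$ is a smooth conic over $\F_p$ with the rational point $x = 1$, so it admits a birational $\F_p$-parametrization.

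Concretely, writing $\F_{p^2} = \F_p[\theta]$ with $\theta^2 = \eta$ for a fixed non-square $\eta \in \F_p$ (the even-characteristic case being analogous), one checks that the Cayley-type map $t \mapsto x(t) = (1+t\theta)/(1-t\theta)$ sends $\F_p$ bijectively onto $G \setminus \{-1\}$. I would then push the summand through this substitution. Using the identity $x^p = x^{-1}$ on $G$, we have
$$
\Tr(h(x)/g(x)) = h(x)/g(x) + h^\sigma(x^{-1})/g^\sigma(x^{-1}),
$$
where $\sigma$ is Frobenius acting on coefficients; substituting $x = x(t)$ and clearing denominators by appropriate powers of $1 \pm t\theta$, this becomes a rational function $\widetilde{H}(t)/\widetilde{G}(t) \in \F_p(t)$ (as the whole expression is $\sigma$-invariant) of degree $O(\max\{\deg g, \deg h\})$. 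Similarly, the factor $\chi(x)$ becomes $\chi$ applied to a rational function of $t$ of bounded degree. The sum then takes the standard shape
$$
\sum_{\substack{t \in \F_p \\ \widetilde{G}(t) \ne 0}} \psi\bigl(\widetilde{H}(t)/\widetilde{G}(t)\bigr)\, \chi(x(t)),
$$
and Lemma~\ref{lem:Weil}, in its rational-function extension, yields the desired bound $O(\max\{\deg g, \deg h\}\, p^{1/2})$. The excluded points ($x = -1$ and any poles of $h/g$ on $G$) contribute only $O(\max\{\deg g,\deg h\})$ and are absorbed in the error term.

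The main obstacle is the non-degeneracy verification needed to invoke Lemma~\ref{lem:Weil}: one must rule out that $\widetilde{H}/\widetilde{G}$ is of the Artin-Schreier form $c + F^p - F$ for some $F \in \overline{\F}_p(t)$, and that the pair $(\widetilde{H}/\widetilde{G},\, \chi(x(t)))$ lies in the exceptional locus where the Weil bound is inapplicable. Since $t \mapsto x(t)$ is birational over $\F_p$, any such degeneracy on the $t$-side would pull back to a corresponding degeneracy of $h/g$ on $G$, which is excluded by the assumptions on $g,h$ and by the nontriviality of $\psi$ and $\chi$. This is precisely the point where the rational-function extension from~\cite[Section~4]{Li15} is required, as Li's original setting~\cite[Theorem~2]{Li2} is phrased only for polynomial arguments.
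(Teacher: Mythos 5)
The paper itself offers no proof of Lemma~\ref{lem:Li}: it is asserted to follow by ``combining the arguments'' of~\cite[Theorem~2]{Li2} with those of~\cite[Section~4]{Li15} (with Li's own guidance acknowledged), so your attempt to write out an actual argument is necessarily a different route. The reduction is sound as far as the additive character is concerned: the Cayley map $t\mapsto x(t)=(1+t\theta)/(1-t\theta)$ does parametrize the norm-one group $G$ minus $\{-1\}$ by $\F_p$, and since $\sigma(x(t))=x(t)^{-1}$ one checks that $\Tr(h(x(t))/g(x(t)))$ is fixed by $\sigma$, hence lies in $\F_p(t)$ with degree $O(\max\{\deg g,\deg h\})$.

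The gap is in the multiplicative factor. In this lemma $\chi$ must be read as a character of the norm-one group $G$ (that is how it is used in the proof of Lemma~\ref{lem:SinglSum 2term}; a character of $\F_p^*$ cannot even be evaluated at $x\in\F_{p^2}\setminus\F_p$). Since $x(t)=(1+t\theta)/\sigma(1+t\theta)=(1+t\theta)^{1-p}$, extending $\chi$ to a character $\tilde\chi$ of $\F_{p^2}^*$ gives $\chi(x(t))=\lambda(1+t\theta)$, where $\lambda=\tilde\chi^{1-p}$ is a multiplicative character of $\F_{p^2}^*$ evaluated at a polynomial over $\F_{p^2}$ in the $\F_p$-variable $t$. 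This is \emph{not} of the form $\chi'(r(t))$ with $\chi'$ a character of $\F_p^*$ and $r\in\F_p(t)$ --- it cannot be, since $G$ is cyclic of order $p+1$ while $\F_p^*$ has order $p-1$ --- so Lemma~\ref{lem:Weil} does not apply to the transformed sum. What is actually needed is a Weil-type bound for mixed sums $\sum_{t\in\F_p}\psi(\widetilde H(t)/\widetilde G(t))\,\lambda(1+t\theta)$ with $\lambda$ a character of the quadratic extension; such a bound is true and is essentially the content of the results of Li that the authors invoke (equivalently, it follows from the Riemann hypothesis for the associated Artin--Schreier--Kummer cover), but it is precisely the nontrivial input, and the claim that the ``rational-function extension'' of the classical one-variable Weil bound over $\F_p$ covers it is where your argument breaks down. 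Your closing paragraph about ruling out Artin--Schreier degeneracy is a legitimate secondary concern (indeed the lemma as stated needs $h/g$ nonconstant modulo such degeneracies), but it does not address this central issue.
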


Note that Lemma~\ref{lem:Li} can be further extended in several 
directions.

We now need a bound on the character sums 
$$
Q_{\psi}(u,v;k,m,N) = \sum_{n \le N} \psi\(u\xi_{kn}+v\xi_{mn}\)
$$
with $u,v \in \F_p$ and non-negative integers $k$ and $m$, 
along consecutive values of the trajectory~\eqref{eq:Traj}.

\begin{lemma}
\label{lem:SinglSum 2term} Assume that the characteristic polynomial of the matrix $A$ 
of the form~\eqref{eq:MatrM} has two distinct roots in $\F_{p^2}$. 
If  $t$ is the period length of the sequence~\eqref{eq:Traj}, 
then, for any  $u,v \in \F_p$ with $(u,v) \ne (0,0)$ and  integers $0 \le k<m$, for any $N \le t$ we have 
$$
Q_{\psi}(u,v;k,m,N)  \ll   m  p^{1/2} \log p. 
$$
\end{lemma}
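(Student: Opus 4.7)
The plan is to use the explicit formula from Lemma~\ref{lem:ExpFrac}. Set $\rho = \vartheta^2$; the determinant condition on $A$ forces $\Nm(\vartheta) = 1$, so $\rho$ lies in the norm-one subgroup $U := \{z \in \F_{p^2}^* : \Nm(z) = 1\}$, which has order $p+1$, and the period $t$ coincides with the order of $\rho$ in $U$. Lemma~\ref{lem:ExpFrac} gives
$$u\xi_{kn} + v\xi_{mn} = C_0 + \frac{A_1}{\rho^{kn} + \gamma} + \frac{A_2}{\rho^{mn} + \gamma}$$
for some $C_0, A_1, A_2 \in \F_{p^2}$ with $(A_1, A_2) \ne (0,0)$ whenever $(u,v) \ne (0,0)$. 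Writing $\psi = \ep(s\,\cdot)$ with $s \in \F_p^*$ and using $\Tr(y) = 2y$ for $y \in \F_p$, one has $\psi(y) = \ep(\Tr(sy/2))$ on $\F_p$, hence
$$Q_\psi(u,v;k,m,N) = \sum_{n=1}^N F(\rho^n), \qquad F(w) := \ep\!\left(\Tr\!\left(h(w)/g(w)\right)\right),$$
for an explicit rational function $h/g \in \F_{p^2}(w)$ with $\max(\deg g, \deg h) \le k+m \le 2m$.

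I would then perform a two-stage completion. First, by a P\'olya--Vinogradov argument on the cyclic group $\Gamma := \langle\rho\rangle \le U$ of order $t$ (expanding the indicator of $\{1,\ldots,N\}$ in additive characters of $\Z/t\Z$),
$$|Q_\psi(u,v;k,m,N)| \le \frac{N}{t}\left|\sum_{w\in\Gamma}F(w)\right| + O(\log t)\max_{\chi\ne 1}\left|\sum_{w\in\Gamma}F(w)\chi(w)\right|,$$
with $\chi$ running over multiplicative characters of $\Gamma$. Second, as $\Gamma$ has index $(p+1)/t$ in $U$, the indicator of $\Gamma$ in $U$ expands as $\mathbf{1}_\Gamma(w) = (t/(p+1))\sum_\eta \eta(w)$ with $\eta$ running over characters of $U$ trivial on $\Gamma$; this rewrites each $\Gamma$-sum as an average of complete sums $\sum_{w\in U}F(w)\widetilde\chi(w)$ over characters $\widetilde\chi$ of $U$.

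Each such complete sum over $U$ is bounded by Lemma~\ref{lem:Li}, giving $|\sum_{w\in U,\,g(w)\ne 0} F(w)\widetilde\chi(w)| \ll m\,p^{1/2}$ uniformly in $\widetilde\chi$; the at most $k+m$ poles of $h/g$ on $U$ contribute only $O(m)$. The triangle inequality then yields $|\sum_{w\in\Gamma} F(w)\chi(w)| \ll mp^{1/2}$, and combining this with $N\le t$ and the classical estimate $\sum_{\lambda=1}^{t-1}1/\min(\lambda,t-\lambda)\ll \log t \ll \log p$ produces the claimed bound $|Q_\psi(u,v;k,m,N)| \ll m\,p^{1/2}\log p$.

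The main obstacle is orchestrating the two-stage completion cleanly: the starting sum is incomplete on an orbit of $\rho$ inside $\Gamma$, which is itself a proper subgroup of the norm-one group $U$ where Lemma~\ref{lem:Li} applies. One must carefully manage the multiplicative characters introduced at both stages so they fit into the hypothesis of Lemma~\ref{lem:Li}, and verify that the rational function $h/g$ produced by the spectral decomposition is nondegenerate of degree $O(m)$ (the edge case $k=0$, $v=0$, where $F$ is constant, is handled separately by noting that $Q_\psi$ is then trivially $N\psi(u\xi_0)$).
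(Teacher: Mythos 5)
Your overall strategy --- the explicit formula of Lemma~\ref{lem:ExpFrac}, completion of the incomplete sum via additive characters mod $t$, extension from the cyclic group $\langle\vartheta^2\rangle$ to an ambient group by multiplicative characters, and then a Weil-type bound --- is exactly the paper's. However, there is a genuine gap at the very first step: you assert that $\det A=1$ forces $\Nm(\vartheta)=1$, so that $\rho=\vartheta^2$ lies in the norm-one subgroup $U\subset\F_{p^2}^*$ of order $p+1$. This is only true when $\vartheta\in\F_{p^2}\setminus\F_p$, because only then are the two roots of $Z^2-eZ+1$ Galois conjugates, so that $\vartheta^p=\vartheta^{-1}$ and $\Nm(\vartheta)=\vartheta\cdot\vartheta^p=1$. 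The hypothesis of the lemma (``two distinct roots in $\F_{p^2}$'') also covers the split case $\vartheta\in\F_p^*$, where $\Nm(\vartheta)=\vartheta^{p+1}=\vartheta^2\ne 1$ in general, the relation $\vartheta\vartheta'=1$ concerns the \emph{other root} rather than the Galois norm, and $t$ divides $p-1$ rather than $p+1$. Your entire two-stage completion then takes place in the wrong group, and Lemma~\ref{lem:Li} is not the relevant tool. The paper treats this case separately: it writes $\vartheta^2=g^s$ for a primitive root $g$ of $\F_p^*$ with $s=(p-1)/t$, extends the sum over one period to a sum over all of $\F_p^*$ (picking up a factor $1/s$ that cancels the degree inflation $\deg\le 2ms$), and applies the hybrid Weil bound of Lemma~\ref{lem:Weil}; only in the non-split case does it pass to the norm-one group and Lemma~\ref{lem:Li}, which is the half of the argument you have written. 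The repair is routine but must be done, since the split case is not excludable under the stated hypotheses.

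Two smaller remarks. First, your treatment of the non-split case, while organized slightly differently (you keep the variable $w=\rho^n$ on the subgroup $\Gamma$ and expand its indicator in characters of $U$, whereas the paper reparametrizes by a generator and absorbs the index into the degree), is equivalent to the paper's and the degree bookkeeping checks out. Second, the edge case $k=0$, $v=0$ that you flag is indeed degenerate, but your proposed resolution does not work: there $|Q_\psi|=N$, which can be as large as $t\gg p^{1/2+\varepsilon}$ and need not be $\ll m\,p^{1/2}\log p$ for small $m$. This is really a gap in the lemma's statement (the paper's proof silently assumes the rational function $\beta u/(x^{ks}+\gamma)+\beta v/(x^{ms}+\gamma)$ is nonconstant, which holds whenever $v\ne0$ or $k\ge1$, as in the application with $k,m$ distinct primes), so it should not be claimed as ``handled'' --- it should be excluded or the hypothesis strengthened.
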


\begin{proof} Let $\vartheta$ be as in Lemma~\ref{lem:ExpFrac}. 
It is clear that $t$ is  the multiplicative order of $\vartheta^2$. 
For every  integer $h$, We now define the sums
$$
Q_{h,\psi}(u,v;k,m)= \sum_{n=1}^t  \psi\(u\xi_{kn}+v\xi_{mn}\)\e(hn/t), 
$$
where
$$
\e(z) = \exp(2 \pi i z).
$$ 

We first consider the case $\vartheta\in \F_p$. 
Since $\vartheta^2$ is of order $t$ it can be written as 
$\vartheta^2 = g^s$ for $s = (p-1)/t$ and some primitive root $g$ of 
$\F_p^*$. 
For $x \in \F_p^*$, we define $\ind x$ by the conditions
$$
g^{\ind x} = x \mand 0 \le \ind x \le p-2.
$$
Hence, using Lemma~\ref{lem:ExpFrac} and the additivity of $\psi$,  we write 
\begin{align*}
Q_{h,\psi}&(u,v;k,m) \\
& = 
 \psi\(\alpha\(u+v\)\) \sum_{n=1}^t \psi\(\frac{\beta u }{g^{kns} + \gamma}+  \frac{\beta v}{g^{mns} + \gamma}\) \e(h n/t)\\
& =  \psi\(\alpha\(u+v\)\) \sum_{n=1}^t \psi\(\frac{\beta u }{g^{kns} + \gamma}+  \frac{\beta v}{g^{mns} + \gamma}\)  \e(h sn/(p-1))\\
& = \frac{1}{s}   \psi\(\alpha\(u+v\)\) \sum_{n=1}^{p-1} \psi\(\frac{\beta u }{g^{kns} + \gamma}+  \frac{\beta v}{g^{mns} + \gamma}\) \e(h sn/(p-1)) .\end{align*}
Now, denote $x = g^n$ and using that $g$ is a primitive root, we obtain
\begin{align*}
Q_{\psi}&(h,u,v;k,m) \\& = \frac{1}{s}   \psi\(\alpha\(u+v\)\)  \sum_{x \in \F_p^*}   
 \psi\(\frac{\beta u }{x^{ks} + \gamma}+  \frac{\beta v}{x^{ms} + \gamma}\) \e(h s\ind x/(p-1)).
\end{align*}
Since the function $x \mapsto  \e(h s \ind x/(p-1))$ is a multiplicative character of $\F_p^*$, recalling 
Lemma~\ref{lem:Weil}, we obtain 
$$
Q_{h,\psi}(u,v;k,m) \ll  \frac{1}{s}sm p^{1/2} = m p^{1/2} .
$$
Using the standard reduction between complete and incomplete sums, 
see~\cite[Section~12.2]{IwKow}, we conclude the proof in this case.

If $\vartheta\in \F_{p^2}\setminus \F_p$, then obviously $\Nm(\vartheta) =1$. We now choose $g$ to get a generator of the 
norm group, consisting of the elements $z\in \F_{p^2}$ with $\Nm(z)=1$ and we proceed as
in he above, however using Lemma~\ref{lem:Li} instead of Lemma~\ref{lem:Weil} in the appropriate place. 
\end{proof}

For sums with one term 
$$
R_{\psi}(u;m,N) = \sum_{n \le N} \psi\(u\xi_{mn}\)
$$
with $u\in \F_p$ and a non-negative integer $m$,  
 we have a slightly more precise statement.

\begin{lemma}
\label{lem:SinglSum 1term} Assume that the characteristic polynomial of the matrix $A$ 
of the form~\eqref{eq:MatrM} has two distinct roots in $\F_{p^2}$. 
If  $t$ is the period length of the sequence~\eqref{eq:Traj}, 
then, for any  $u \in \F_p^*$   and  an integers $m>0$, for any $N \le t$ we have 
$$
R_{\psi}(u;m,N)  \ll   \gcd(m,t)  p^{1/2} \log p. 
$$
\end{lemma}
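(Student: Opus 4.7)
The plan is to exploit the fact that, unlike in Lemma~\ref{lem:SinglSum 2term}, the sequence $\xi_{mn}$ has period $t_0 := t/\gcd(m,t)$ in $n$, which is a proper divisor of $t$ whenever $\gcd(m,t)>1$; this shorter period is precisely what will produce the $\gcd(m,t)$ factor in place of $m$.

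First, I would invoke Lemma~\ref{lem:ExpFrac} to write
$$
\xi_{mn} = \alpha + \frac{\beta}{\omega^n + \gamma}, \qquad \omega = \vartheta^{2m},
$$
so that $\omega$ has multiplicative order exactly $t_0$. For each integer $h$, consider the complete twisted sum
$$
R_{h,\psi}^*(u;m) = \sum_{n=1}^{t_0} \psi\(u\xi_{mn}\) \e(hn/t_0).
$$
When $\vartheta \in \F_p$, following the pattern of Lemma~\ref{lem:SinglSum 2term}, I would pick a primitive root $g$ of $\F_p^*$, write $\omega = g^{s_0}$ with $s_0 = (p-1)/t_0$, extend the range of summation from $[1,t_0]$ to $[1,p-1]$ at the cost of a factor $1/s_0$ (using $t_0$-periodicity of the summand in $n$), substitute $x = g^n$, and obtain
$$
R_{h,\psi}^*(u;m) = \frac{\psi(u\alpha)}{s_0} \sum_{x\in\F_p^*} \psi\(\frac{u\beta}{x^{s_0}+\gamma}\)\chi(x),
$$
where $\chi(x) = \e(h s_0\ind x/(p-1))$ is a multiplicative character of $\F_p^*$. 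Lemma~\ref{lem:Weil}, applied with denominator $X^{s_0}+\gamma$ of degree $s_0$, bounds the inner sum by $O(s_0 p^{1/2})$, yielding $R_{h,\psi}^*(u;m) \ll p^{1/2}$. When $\vartheta \in \F_{p^2}\setminus \F_p$, I would instead pick a generator of the norm-one subgroup of $\F_{p^2}^*$ (of order $p+1$) and proceed identically, invoking Lemma~\ref{lem:Li} in place of Lemma~\ref{lem:Weil}.

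To descend from the complete sum to $R_\psi(u;m,N)$, I would write $N = q t_0 + r$ with $0\le r < t_0$. Since $\psi(u\xi_{mn})$ is $t_0$-periodic in $n$,
$$
R_\psi(u;m,N) = q\, R^*_{0,\psi}(u;m) + \sum_{n=1}^{r} \psi\(u\xi_{mn}\),
$$
and $q \le N/t_0 \le t/t_0 = \gcd(m,t)$. The first contribution is $O(\gcd(m,t)\, p^{1/2})$ by the previous step, while the residual incomplete sum of length less than $t_0$ is $O(p^{1/2}\log t_0) = O(p^{1/2}\log p)$ via the standard completion technique applied with the uniform bound $R_{h,\psi}^*(u;m) \ll p^{1/2}$. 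The main work is the period-reduction bookkeeping that replaces $t$ by $t_0$; the sole technical hazard is the case $\vartheta \notin \F_p$, where one must carefully pass to the norm-one subgroup and use Lemma~\ref{lem:Li}, exactly as in the proof of Lemma~\ref{lem:SinglSum 2term}, while degenerate situations such as $\omega=1$ (when $\xi_{mn}$ is constant and $\gcd(m,t)$ is comparable to $t\ge N$) are consistent with the claimed bound trivially.
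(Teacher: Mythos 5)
Your proof is correct, and it reaches the stated bound by a route that is organized differently from the paper's. The paper proves this lemma by a reduction to the one-term case of Lemma~\ref{lem:SinglSum 2term}: setting $d=\gcd(m,t)$ and $k=m/d$, it passes to the iterate $B=A^{k}$ and the subsequence $\zeta_n=\xi_{kn}$, then applies the earlier lemma to $\zeta_{dn}=\xi_{mn}$, so that the factor $d$ emerges from the degree $ds$ of the polynomial $x^{ds}+\gamma$ inside the Weil/Li bound. You instead work directly from Lemma~\ref{lem:ExpFrac}, observe that $\vartheta^{2m}$ has order $t_0=t/d$, complete the sum over the true period $t_0$ of $n\mapsto\xi_{mn}$ to get $O(p^{1/2})$ per full period, and recover the factor $d$ as the number of complete periods inside $[1,N]$, with a single completion step for the tail. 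The two computations are arithmetically equivalent (since $s_0=(p-1)/t_0=ds$, the same polynomial $x^{ds}+\gamma$ of the same degree appears in both), but your bookkeeping is arguably the more robust: the paper's reduction tacitly asserts that $\zeta_n$ has period $t$, which can fail when $\gcd(m/d,t)>1$ (e.g.\ $m=8$, $t=12$ gives $\zeta_n=\xi_{2n}$ of period $6$), whereas your period-counting argument, together with the explicit treatment of the degenerate case $t\mid m$, covers all cases cleanly.
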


\begin{proof} Let $d = \gcd(m,t)$. We set 
$$
k = m/d \mand s = t/d.
$$
Let $B=A^k$. We also consider the sequences
$\zeta_n = \xi_{kn}$
then instead of~\eqref{eq:Traj}, we can write
$$
\zeta_{n} =B\(\zeta_{n-1}\) = B^n\(\xi_0\),
 \qquad n =  1,2,
\ldots\,.
$$
Hence,  by the standard arguments as before, we see that the period of the sequence $\zeta_n$ 
$n =  1,2, \ldots$, is $t$. 
Using a special case  (with only one term) applied to  $\zeta_{dn} = \xi_{mn}$ instead of $\xi_{n}$, 
we obtain the result. 
\end{proof}

\section{Double sums and correlations with multiplicative functions} 

Now, by applying the machinery in the proof of the criterion of Bourgain, Sarnak and Ziegler~\cite[Theorem~2]{BSZ} (which in turn improves the result of 
K{\'a}tai~\cite{Kat}), we obtain our main technical result.  

We present it in a form which is more general and flexible than we need here, since we believe it may 
find other applications. 

\begin{lemma}
\label{lem:KBSZ} Let $\nu$ be a multiplicative function and
$F$ an arbitrary periodic arithmetic function with period $t$. Assume 
$$
|\nu(n)| \leq 1 \mand  |F(n)| \leq 1, \qquad  n \in \N.
$$ 
 We further assume that for any primes $r \neq s$, and for any positive  integer $h \le t$ we have
$$
\left|\sum_{n \leq h} F(nr) \overline{F}(ns)\right| \ll 
  \max\{r,s\}  t \rho 
  $$
for some real $\rho < 1$. 
Then for any real $\alpha$ with  
\begin{equation}
\label{eq: Cond1}
 \alpha^{-2} \exp\( 2 \alpha^{-1} \(\log (1/\alpha)\)^6\)  \ll  \rho^{-1}
\end{equation}
and integer  $N$ with 
\begin{equation}
\label{eq: Cond2}
N  \gg  t \rho \exp\(4 \alpha^{-1} \(\log (1/\alpha)\)^6\), 
\end{equation}
 we have
$$\left|\sum_{ n \leq N}\nu(n)F(n)\right| \ll  \alpha  N. 
$$
\end{lemma}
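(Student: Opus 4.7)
The plan is to adapt the Bourgain--Sarnak--Ziegler criterion \cite[Theorem~2]{BSZ}, in the refined form due to K{\'a}tai \cite{Kat}, to a quantitative setting with explicit tracking of parameters. The mechanism is to use the multiplicativity of $\nu$ together with Cauchy--Schwarz to reduce estimating $S := \sum_{n \le N} \nu(n) F(n)$ to estimating correlation sums of $F$, which are then controlled by the hypothesis. Concretely, I would fix an auxiliary parameter $M$ of size roughly $\exp(\alpha^{-1}(\log(1/\alpha))^6)$, take $\mathcal{P}$ to be the set of primes in a suitable interval $[M_1, M_2]$ of this order, and set $P := |\mathcal{P}|$, $L := \sum_{p \in \mathcal{P}} 1/p$.

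The first step uses multiplicativity of $\nu$ together with an elementary sieve observation to produce a K{\'a}tai-type identity of the shape
$$L \cdot S = \sum_{p \in \mathcal{P}} \nu(p) \sum_{m \le N/p} \nu(m) F(pm) + E,$$
where the error $E$ bundles together the fluctuation of $\omega_{\mathcal{P}}(n) := \#\{p \in \mathcal{P}: p\mid n\}$ about its mean $L$ (controlled by Tur{\'a}n--Kubilius, giving an $O(N\sqrt{L})$ contribution) and the contribution of pairs $(p,m)$ with $p \mid m$ (which is $O(N/M_1)$).

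In the second step I would apply Cauchy--Schwarz in the $m$-variable to eliminate $\nu(m)$:
$$\Bigl|\sum_{p \in \mathcal{P}} \nu(p) \sum_{m \le N/p} \nu(m) F(pm)\Bigr|^2 \le \frac{N}{M_1} \sum_{m \le N/M_1} \Bigl|\sum_{p \in \mathcal{P}} \nu(p) F(pm)\Bigr|^2.$$
Expanding the inner square splits the right-hand side into a diagonal piece ($p = q$), bounded trivially by $\ll PN/M_1$, and an off-diagonal piece ($p \ne q$). For the off-diagonal terms, the correlation hypothesis combined with the periodicity of $F$ (allowing any range $h$ to be decomposed into full periods of length $t$ plus a tail of length $\le t$) gives
$$\Bigl|\sum_{m \le h} F(pm) \overline{F(qm)}\Bigr| \ll \max(p,q)(h + t)\rho,$$
leading to a total off-diagonal contribution of $\ll P^2 M_2 (N/M_1 + t) \rho$.

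Assembling these estimates yields a bound of the shape
$$|S|^2 \ll \frac{N^2 P}{L^2 M_1^2} + \frac{N^2 P^2 M_2 \rho}{L^2 M_1^2} + \frac{N P^2 M_2 t \rho}{L^2 M_1} + \frac{N^2}{L},$$
and the conditions \eqref{eq: Cond1}--\eqref{eq: Cond2} encode precisely the feasibility of choosing $M_1, M_2 \asymp M$ making each term at most $\alpha^2 N^2$. The main obstacle will be this tight multi-parameter balance: the Tur{\'a}n--Kubilius term $N^2/L$ forces $L$ to be large, the Cauchy--Schwarz contribution forces $P^2 M_2 \rho$ to be small, and the tail term forces $N$ to be large relative to $P^2 M_2 t\rho/L^2 M_1$. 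Reconciling all these simultaneously is what dictates the specific exponent $\alpha^{-1}(\log(1/\alpha))^6$ in \eqref{eq: Cond1}--\eqref{eq: Cond2}, and may require a sharper treatment of the Tur{\'a}n--Kubilius step (for instance via higher moments, or by exploiting the multiplicative structure of $\nu$ directly) rather than the naive second-moment bound.
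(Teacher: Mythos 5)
Your overall architecture --- reduce to correlations of $F$ via the multiplicativity of $\nu$ and Cauchy--Schwarz, split diagonal from off-diagonal, and use the periodicity of $F$ to cover an arbitrary range by full periods plus a tail so that the hypothesis gives $\ll \max(p,q)(h+t)\rho$ --- matches what the paper does for the inner sums. The gap is in your first step. You propose the single-scale K\'atai identity $L\cdot S=\sum_{p\in\mathcal P}\nu(p)\sum_{m\le N/p}\nu(m)F(pm)+E$ with the Tur\'an--Kubilius error $E\ll N\sqrt{L}$, which after dividing by $L$ contributes $N/\sqrt{L}$; to make this $\le\alpha N$ you need $L=\sum_{p\in\mathcal P}1/p\gg\alpha^{-2}$. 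Since $L\approx\log(\log M_2/\log M_1)$, this forces $M_2\ge M_1^{\exp(\alpha^{-2})}$, i.e.\ $M_2$ doubly exponential in $\alpha^{-1}$. Your off-diagonal term then involves $P^2M_2\rho$ with $P\asymp M_2/\log M_2$, so the admissible range becomes $\rho^{-1}\gg\exp(\exp(c\alpha^{-2}))$, vastly more restrictive than~\eqref{eq: Cond1}, which only asks for $\rho^{-1}\gg\alpha^{-2}\exp(2\alpha^{-1}(\log(1/\alpha))^6)$. Conversely, if you take $M_1\asymp M_2\asymp\exp(\alpha^{-1}(\log(1/\alpha))^6)$ as you suggest, then $L=o(1)$ and the term $N^2/L$ in your assembled bound is worse than trivial. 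So the multi-parameter balance you flag as ``the main obstacle'' is not merely tight: it is infeasible for this identity.

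The fix is not higher moments of $\omega_{\mathcal P}$ (the relative fluctuation $1/\sqrt{L}$ is intrinsic to that approach), but the Bourgain--Sarnak--Ziegler decomposition, which is what the paper uses: set $j_0=\alpha^{-1}(\log(1/\alpha))^3$, $j_1=j_0^2$, $R_j=(1+\alpha)^j$, let $\mathcal P_j$ be the primes in $[R_j,R_{j+1})$ and $\mathcal Q_j$ the integers $m\le N/R_{j+1}$ free of prime factors in $\bigcup_{i\le j}\mathcal P_i$. Every $n\le N$ outside a set of density $\ll\prod_j\prod_{p\in\mathcal P_j}(1-1/p)\approx j_0/j_1\le\alpha$ factors as $n=rm$ with $r\in\mathcal P_j$, $m\in\mathcal Q_j$ for some $j$, and these products are pairwise distinct; this replaces your K\'atai identity at no $\sqrt{L}$ cost, contributing only the harmless $\alpha N$. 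One then runs your Cauchy--Schwarz and correlation estimate on each scale $j$ separately, and the bounds $\sum_j\#\mathcal P_j\ll\exp(1.5\,\alpha^{-1}(\log(1/\alpha))^6)$ and $\sum_j\#\mathcal P_j\,\#\mathcal Q_j\le N$ are precisely what produce the thresholds~\eqref{eq: Cond1} and~\eqref{eq: Cond2}.
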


\begin{proof} We follow  the proof of~\cite[Theorem~2]{BSZ}.
In particular, let $\alpha >0$  be  some sufficiently small (as otherwise there is
nothing to prove). 
As in~\cite[Equation~(2.1)]{BSZ} we define
\begin{equation}
\label{eq: jj}
j_0 = \frac{ \(\log (1/\alpha)\)^3}{ \alpha} \mand j_1 = j_0^2. 
\end{equation}
For every integer $j \in [j_0, j_1+1]$ we also define
$$
R_j =  (1+\alpha)^{j} \mand M_j = N/R_{j+1}. 
$$
We note that we do not assume that these quantities are integer numbers. 

Furthermore, for every integer $j \in [j_0, j_1]$
we define $\cP_j$ as the set of primes in the interval $\left[R_j , R_{j+1}\right)$ and then we also 
define the set
$$
\cQ_j = \left\{m \in \left[1, M_j \right]~:~ m\ \text{has no prime factors in}\  \bigcup_{i \le j} \cP_j\right\}.
$$
We note that, by the prime number theorem (with an explicit bound on the error term, we do not however 
need the full power of the current knowledge such as~\cite[Corollary~8.30]{IwKow}), we have the following bound on the cardinality of $\cP_j$, for every 
  $j \in [j_0, j_1]$: 
\begin{equation}
\label{eq: Pj Card}
\# \cP_j \le R_j \(\frac{1}{j} + \frac{1}{\alpha j^2} + O\(\exp\(- \sqrt{\alpha j}\)\)\) \ll    \frac{1}{j} R_j, 
\end{equation}
see~\cite[Equation~(2.8)]{BSZ}, where we have also used that $\alpha j \ge \alpha j_0 \gg 1$. 

As in the proof of~\cite[Theorem~2]{BSZ} we notice that 
the products of the $mr$ with $r\in \cP_j$, $m\in \cQ_j$ for some  $j \in [j_0, j_1]$
are pairwise distinct and obviously belong the interval $[1,N]$, so we conclude 
\begin{equation}
\label{eq: sum PjQj Card}
\sum_{  j_0 \le j \le j_1} \# \cP_j  \# \cQ_j \le N, 
\end{equation}
which is also used in the derivation of~\cite[Equations~(2.20) and~(2.21)]{BSZ}. 

Furthermore, using~\eqref{eq: Pj Card} and recalling choice of the parameters~\eqref{eq: jj}, 
we obtain 
\begin{align*}
\sum_{ j_0 \le j \le j_1} \# \cP_j & \ll 
\sum_{  j_0 \le j \le j_1}\frac{1}{j}  (1+\alpha)^j  \le  (1+\alpha)^{j_1}   \sum_{  j_0 \le j \le j_1}\frac{1}{j} \\
&  \leq   (1+\alpha)^{j_1} \log (j_1/j_0)   \le  \exp\(\alpha j_1\) \log j_0.
\end{align*}
Hence  
\begin{equation}
\label{eq: sum Pj Card}
\sum_{ j_0 \le j \le j_1} \# \cP_j  \ll \exp\(1.5 \alpha^{-1} \(\log (1/\alpha)\)^6\), 
\end{equation}
 provided that $\alpha$ is sufficiently small. 

Now to establish the desired result, we recall that by~\cite[Equation~(2.16)]{BSZ}
\begin{equation}
\label{eq: W}
 \sum_{ n \leq N}\nu(n)F(n) \ll  \sum_{  j_0 \le j \le j_1} W_j + \alpha N, 
\end{equation}
where 
$$
W_j = \sum_{m \in   \cQ_j } \left| \sum_{r \in \cP_j} \nu(r)F(mr)\right|, \qquad  j_0 \le j \le j_1. 
$$

Using the Cauchy--Schwarz inequality, extending the range of summation over $m$ to all positive integers up to  $M_j$, 
 changing the order of summation and recalling that $|\nu(n)|\le 1$, we obtain 
\begin{equation}
\label{eq: W2}
W_j^2 \le  \# \cQ_j   \sum_{r,s \in  \cP_j}  \left| \sum_{m \le M_j} F(mr)\overline{F(ms)}\right|, \qquad  j_0 \le j \le j_1, 
\end{equation}
see~\cite[Equation~(2.17)]{BSZ}. 

The contribution $T_{1,j}$ to the right hand side of~\eqref{eq: W2} from the diagonal terms 
can estimated as in~\cite[Equation~(2.20)]{BSZ} by 
\begin{equation}
\label{eq: T1}
T_{1,j} \ll  M_j \# \cP_j  . 
\end{equation}

To estimate the remaining contribution $T_{2,j}$ from the off-diagonal terms 
we recall our assumption on bilinear sums  with the function $F$. 
More precisely,  splitting the interval of summation into at most $M_j/t$ intervals 
of length $t$ and at most $1$ interval of length $h \le t$, we obtain
$$
\left|\sum_{n \leq M_j} F(nr) \overline{F}(ns)\right| \leq \max\{r,s\}  (M_j/t+1)t \rho .
$$

Hence, using 
(for simplicity) that $r,s \le R_{j+1}\le 2 R_j$ and $M_j R_j \le N$, we derive 
\begin{equation}
\label{eq: T2}
T_{2,j}\le 2 \(\# \cP_j\)^2 R_{j}\(M_j+t\)\rho    \ll   \(\# \cP_j\)^2 \(N +R_j t\)\rho.
\end{equation}

Substituting the bound~\eqref{eq: T1} and~\eqref{eq: T2} in~\eqref{eq: W2}, we obtain 
\begin{align*}
W_j^2 & \ll M_j  \# \cP_j   \# \cQ_j   +  \(\# \cP_j\)^2 \(N/t +R_j\)t\rho  \# \cQ_j\\
&\le  M_j    \# \cP_j  \# \cQ_j    +   N\(\# \cP_j\)^2  \# \cQ_j  \rho +   \(\# \cP_j\)^2   \# \cQ_j R_j  t \rho ,
\end{align*}
which after the substitution in~\eqref{eq: W} implies
\begin{equation}
\label{eq: SSS}
 \sum_{ n \leq N}\nu(n)F(n) \ll    S_1  + S_2   \sqrt{N\rho}   +   S_3  \sqrt{t \rho} + \alpha N, 
\end{equation}
where 
\begin{align*}
S_1 &= \sum_{  j_0 \le j \le j_1} \(M_j  \# \cP_j  \# \cQ_j \)^{1/2} , \\
S_2 & = \sum_{  j_0 \le j \le j_1} \# \cP_j \( \# \cQ_j\)^{1/2} , \\
S_3 & = \sum_{  j_0 \le j \le j_1} \# \cP_j  \(\# \cQ_j   R_j \)^{1/2} .
\end{align*}

To bound the sum  $S_1$, we use the Cauchy--Schwarz inequality and write 
$$
S_1 \ll  \(\sum_{  j_0 \le j \le j_1}     \# \cP_j   \# \cQ_j \)^{1/2}\(\sum_{  j_0 \le j \le j_1} M_j\)^{1/2}.
$$
We estimate the first sum  using~\eqref{eq: sum PjQj Card}, while the second sums 
is easily estimated as 
\begin{align*}
 \sum_{  j_0 \le j \le j_1} M_j &= N   \sum_{  j_0 \le j \le j_1}  (1+\alpha)^{-j-1} \le 
 N   (1+\alpha)^{-j_0-1} \sum_{   j =0}^\infty  (1+\alpha)^{-j}\\
 &= N   \frac{1+\alpha}{\alpha}  (1+\alpha)^{-j_0-1} \ll  N   \frac{1}{\alpha}   \exp(- j_0 \log(1+\alpha) ).
 \end{align*}
Therefore, by the definition of $j_0$ in~\eqref{eq: jj} combined with the inequality  $\log(1+x) \geq x/2$ for $x \in [0,1]$, we obtain 
\begin{equation}
\label{eq: S1}
S_1  \ll    N \exp\(-0.25 \(\log (1/\alpha)\)^3\) .
\end{equation}
Note that~\eqref{eq: S1} is stronger than the bound  recorded in~\cite[Equation~(2.20)]{BSZ}, however this
does not affect the final result as it is dominated by the term $\alpha N$, which is already 
present in~\eqref{eq: SSS}. In particular, we rewrite as 
\begin{equation}
\label{eq: S1 Simple}
S_1  \ll   \alpha  N.
\end{equation}

For the sum $S_2$, writing 
$$
\# \cP_j \( \# \cQ_j\)^{1/2} = \( \# \cP_j \# \cQ_j\)^{1/2}   \( \# \cP_j\)^{1/2},
$$
and applying again the Cauchy--Schwarz inequality,  we obtain
$$
S_2 \le  \(\sum_{  j_0 \le j \le j_1}     \# \cP_j   \# \cQ_j \)^{1/2}\(\sum_{  j_0 \le j \le j_1} \# \cP_j\)^{1/2} .
$$
Now, we see from~\eqref{eq: sum PjQj Card} and~\eqref{eq: sum Pj Card} that 
\begin{equation}
\label{eq: S2}
S_2 \  \ll  N^{1/2} \exp\( \alpha^{-1} \(\log (1/\alpha)\)^6\), 
\end{equation}
We now see that~\eqref{eq: S2}, under the condition~\eqref{eq: Cond1} implies 
\begin{equation}
\label{eq: S2 Simple}
 S_2   \sqrt{N\rho}   \ll   N   \sqrt{\rho}  \exp\( \alpha^{-1} \(\log (1/\alpha)\)^6\)   \ll   \alpha  N .
\end{equation}

Therefore,  it remains to estimate the sum  $S_3$.  We notice that the trivial inequality $\# \cQ_j   R_j \le N$ yields 
$$
S_3  \le N^{1/2}  \sum_{  j_0 \le j \le j_1} \# \cP_j ,
$$ 
which together with~\eqref{eq: sum Pj Card} implies
\begin{equation}
\label{eq: S3}
S_3  \ll  N^{1/2} \exp\(1.5\alpha^{-1} \(\log (1/\alpha)\)^6\). 
\end{equation}
We now see that~\eqref{eq: S3}, under the condition~\eqref{eq: Cond2} implies 
\begin{equation}
\label{eq: S3 Simple}
 S_3    \sqrt{t \rho} \le \sqrt{Nt \rho}  \exp\(1.5\alpha^{-1} \(\log (1/\alpha)\)^6\)   \ll   \alpha  N .
\end{equation}

Substituting the bounds~\eqref{eq: S1  Simple},  \eqref{eq: S2 Simple} and~\eqref{eq: S3  Simple} in~\eqref{eq: SSS}, we derive the desired result. 
\end{proof}

\section{Proof of Theorem~\ref{thm:MobSum}}
We see from Lemma~\ref{lem:SinglSum 2term} that  in   Lemma~\ref{lem:KBSZ} we can take  some $\rho$ with
$$
t^{-1} p^{1/2}  \log p \ll \rho \ll t^{-1} p^{1/2}  \log p.
$$

It is now easy to see that~\eqref{eq: Cond_T1} and~\eqref{eq: Cond_T2}
  ensure the validity of~\eqref{eq: Cond1} and~\eqref{eq: Cond2}, respectively. 
Indeed, using~\eqref{eq: Cond_T1} we see that  for a sufficiently large $p$ we have $\alpha^{-1} \le \log p$
and thus 
$$
 \alpha^{-2} \le (\log p)^2 \mand \(\log (1/\alpha)\)^6\le  ( \log \log p)^6
$$
we derive 
\begin{align*}
 \alpha^{-2} & \exp\( 2 \alpha^{-1} \(\log (1/\alpha)\)^6\)\\ 
 & \qquad \le 
 (\log p)^2 
  \exp\( \frac{2\varepsilon \log p}{3  ( \log \log p)^6}  ( \log \log p)^6\)\\
  & \qquad = p^{2\varepsilon/3}  (\log p)^2  \ll \rho^{-1}.
\end{align*} 
Thus~\eqref{eq: Cond1} holds.

Finally, since $p^{1/2}  \log p \gg t \rho$ then~\eqref{eq: Cond_T2} implies~\eqref{eq: Cond2}
provided that $\alpha$ is small enough and the result follows.

\section{Further perspectives} 

We also note that our results behind the estimates of  Theorem~\ref{thm:MobSum}, in 
particular Lemma~\ref{lem:KBSZ} below
can be applied to estimating exponential sum along sequences with other arithmetic constraints such as 
{\it square-freeness\/} (in which case one can expect stronger results) or {\it smoothness\/}. 

Furthermore, one can also obtain essentially the same results
for analogues of the sums~\eqref{eq:Sum S}  with sequences of the form $r\(\xi_n\)$ with a 
rational function $r(X) \in \Q(X)$. The same approach also works, without 
any charges,  for sums of multiplicative characters with $r\(\xi_n\)$. 

One can also apply our approach to other dynamical systems such as polynomial 
dynamical systems $x \mapsto f(x)$ for a polynomial $f\in \F_p[X]$ of a fixed degree $d\ge 2$  or to monomial  dynamical  systems
$x \mapsto x^e$ for an integer $e\ge 1$  with $\gcd(e,p-1)=1$, which however can be rather large in terms of $p$. 

We also hope that a similar approach may be applied to estimating exponential and character sums along squares, that is, with $\xi_{n^2}$. This kind of questions has also been introduced by  Bourgain~\cite{Bour0,Bour1,Bour1.7}.  Note that for the
dynamical system
 $x \mapsto gx$ over  $\F_p$ such a bound is  given in~\cite{FHS} (which one can 
 probably improve using some arguments from~\cite{OstShp2}).

 \section*{Acknowledgement}
 
The authors are grateful to Winnie Li for her help with explaing  how  Lemma~\ref{lem:Li} follows from a combination of the arguments of~\cite{Li15} and~\cite{Li2}. The authors also would like to thank Stam Nicolis for attracting out attention to connections with theoretical physics. 

 This work started during a very enjoyable visit by the second author to the 
 University of Rouen, whose support and hospitality are gratefully acknowledged. 
 
During the preparation of this work the second author was supported in part by 
the  Australian Research Council  Grants DP170100786 and DP180100201.

\end{document}